\documentclass[reqno]{amsart}
\usepackage{enumitem}

\usepackage[utf8x]{inputenc}  % this enables accented characters

\usepackage{listings}
\usepackage{caption}
\usepackage{easytable}

\usepackage{setspace}
\usepackage[left=3.1cm, right=3.1cm, bottom=4cm]{geometry}                 

\usepackage{graphicx} % this enables figures
\usepackage{pgf,tikz}
\usetikzlibrary{arrows}
\usepackage{amssymb}
\usepackage{pdfsync}
\usepackage{mathrsfs}
\usepackage{hyperref} % this enables hyperlinks

\usepackage{epstopdf}
\usepackage{bbm} 
\usepackage[colorinlistoftodos,prependcaption,textsize=tiny]{todonotes}
\usepackage{xargs}
\usepackage{bm}
\usepackage{lmodern}

\usepackage{listings} % code highlighting

\def\R{\mathbb{R}}

\def\Z{\mathbb{Z}}
\def\C{\mathbb{C}}

\def\F{\mathbb{F}}

\def\A{\mathcal{A}}

\def\E{\mathcal{E}}

\def\P{\mathcal{P}}

\def\X{\mathcal{X}}

\renewcommand{\d}{\text{\rm d}}

\newcommand{\Fq}{\mathbb{F}_q}
\newcommand{\Pd}{\mathcal{P}_d}
\newcommand{\Prob}{\mathbb{P}}
\newcommand{\Simp}{\Delta_q}
\DeclareMathOperator{\Bin}{Bin}
\DeclareMathOperator{\Sym}{Sym}
\DeclareMathOperator{\Tr}{Tr}
\newcommand{\qtq}[1]{\quad\text{#1}\quad}

\DeclareRobustCommand{\rchi}{{\mathpalette\irchi\relax}}
\newcommand{\irchi}[2]{\raisebox{\depth}{$#1\chi$}} % inner command, used by \rchi

\newtheorem{theorem}{Theorem}
\newtheorem{conjecture}[theorem]{Conjecture}

\newtheorem{corollary}[theorem]{Corollary}
\newtheorem{definition}[theorem]{Definition}
\newtheorem*{definition*}{Definition}
\newtheorem{remark}[theorem]{Remark}
\newtheorem{proposition}[theorem]{Proposition}
\newtheorem{lemma}[theorem]{Lemma}

\newtheorem{example}[theorem]{Example}

\makeatletter
\DeclareFontFamily{U}{tipa}{}
\DeclareFontShape{U}{tipa}{m}{n}{<->tipa10}{}
\newcommand{\arc@char}{{\usefont{U}{tipa}{m}{n}\symbol{62}}}%

\makeatother

\numberwithin{equation}{section}

\allowdisplaybreaks

\newcommand{\intav}[1]{\mathchoice {\mathop{\vrule width 6pt height 3 pt depth  -2.5pt
\kern -8pt \intop}\nolimits_{\kern -6pt#1}} {\mathop{\vrule width
5pt height 3  pt depth -2.6pt \kern -6pt \intop}\nolimits_{#1}}
{\mathop{\vrule width 5pt height 3 pt depth -2.6pt \kern -6pt
\intop}\nolimits_{#1}} {\mathop{\vrule width 5pt height 3 pt depth
-2.6pt \kern -6pt \intop}\nolimits_{#1}}}

\newcommand{\intavl}[1]{\mathchoice {\mathop{\vrule width 6pt height 3 pt depth  -2.5pt
\kern -8pt \intop}\limits_{\kern -6pt#1}} {\mathop{\vrule width 5pt
height 3  pt depth -2.6pt \kern -6pt \intop}\nolimits_{#1}}
{\mathop{\vrule width 5pt height 3 pt depth -2.6pt \kern -6pt
\intop}\nolimits_{#1}} {\mathop{\vrule width 5pt height 3 pt depth
-2.6pt \kern -6pt \intop}\nolimits_{#1}}}

\title[Sharp extension for the moment curve on finite fields II]
{Sharp endpoint extension inequalities \\ for the moment curve on finite fields II: \\ an extremal property of the uniform distribution}

\author[Biswas]{Chandan Biswas}
\address{Department of Mathematics, Indian Institute of Technology Bombay, Mumbai 400076, India}
\email{cbiswas@iitb.ac.in}

\author[Carneiro]{Emanuel Carneiro}
\address{ICTP - The Abdus Salam International Centre for Theoretical Physics, 
Strada Costiera, 11, I - 34151, Trieste, Italy}
\email{carneiro@ictp.it}

\author[Flock]{Taryn C. Flock}
\address{Macalester College\\Mathematics, Statistics, and Computer Science\\
Olin-Rice Science Center, Room 222\\
Saint Paul, MN 55105-1899\\USA}
\email{tflock@macalester.edu}

\author[Madrid]{Jos\'{e} Madrid}
\address{Department of Mathematics, Virginia Polytechnic Institute and State University, 225 Stanger Street, Blacksburg, VA 24061-1026, USA}
\email{josemadrid@vt.edu}

\author[Oliveira e Silva]{Diogo Oliveira e Silva}
\address{ 
Center for Mathematical Analysis, Geometry and Dynamical Systems \&
Departamento de Matem\'{a}tica\\ 
Instituto Superior T\'{e}cnico\\
Universidade de Lisboa\\
Av.\@ Rovisco Pais\\ 
1049-001 Lisboa, Portugal} 
\email{diogo.oliveira.e.silva@tecnico.ulisboa.pt}

\author[Stovall]{Betsy Stovall}
\address{University of Wisconsin--Madison\\Department of Mathematics\\
480 Lincoln Drive\\ 
Madison, WI 53706\\
USA}
\email{stovall@math.wisc.edu}

\author[Tautges]{James Tautges}
\address{University of Pennsylvania\\Department of Mathematics\\
204 South 32nd Street \\
Philadelphia, PA 19104 \\
USA}
\email{tautges@sas.upenn.edu}

\begin{document}

\subjclass[2020]{42B10, 12E20, 05C70, 05B25, 26D15, 05E05}
\keywords{Sharp restriction theory, finite fields, moment curve, maximizers, uniform distribution}

\begin{abstract}
We identify the optimal constant and describe all maximizers for the Fourier endpoint extension inequality associated with the moment curve over finite fields. This confirms a conjecture proposed in the authors’ earlier work. The proof proceeds by showing that the problem is equivalent to a purely probabilistic extremal statement: among all probability distributions on a finite set, the uniform distribution uniquely maximizes the expected number of distinct rearrangements of an i.i.d.\@ sample.
\end{abstract}
\date{\today}

\maketitle

%\tableofcontents

\section{Introduction and statement of results}\label{Sec : intro}

\subsection{Setup}\label{Sec : setup}
We will follow the notation and framework used in ~\cite{Group2026, MockenhauptTao2004}. Let $d \ge 2$ be an integer, and let $\Fq$ be the field with $q$ elements, with $q = p^n$ for some positive integer $n$ with the prime $p > d$. The moment curve is
$$
\Gamma:=\{\gamma(t):t\in\Fq\}, \qtq{where} \gamma(t):=(t,t^2,\dots,t^d)\in\Fq^d.
$$
Let $\sigma$ be the normalized counting measure on $\Gamma$, i.e., the measure giving each of the $q$ points of $\Gamma$ mass $1/q$; thus, $\|g\|_{L^2(\Gamma,d\sigma)}^2 = \frac1q \sum_{\xi\in\Gamma}|g(\xi)|^2$ for a function $g : \Gamma \to \C$.

For $y \in \Fq$, let $\Tr : \Fq \to \F_p$ be the field trace,
$$
\Tr(y):=y+y^p+\dots+y^{p^{n-1}},
$$
viewed as an element of $\F_p$, and set $e(y) := \exp(2 \pi i \Tr(y)/p)$. For $x, \xi \in \Fq^d$, we write
$x \cdot \xi := \sum_{i = 1}^d x_i \xi_i \in \Fq$. We now state the object of our study. Given a function $f : \Gamma \to \mathbb C$, the Fourier extension map $(f \sigma)^\vee : \Fq^d \to \mathbb C$ is defined by
\begin{equation}\label{E : def extn}
(f\sigma)^\vee(x):=\frac1q\sum_{\xi\in\Gamma}f(\xi)\,e(x\cdot\xi).
\end{equation}

Let $\Pd$ denote the set of partitions of $d$ into at most $d$ nonnegative parts, written as non-increasing $d$-tuples $\ell=(\ell_1,\dots,\ell_d)$ with $\sum_i\ell_i=d$. For $1\le j\le d$, let
$$
b_j(\ell):=\#\{1\le i\le d:\ell_i=j\},\qquad b_0(\ell):=\#\{1\le i\le
d:\ell_i=0\}+(q-d).
$$
Write $b(\ell):=(b_0(\ell),\dots,b_d(\ell))$ and set
\begin{equation}\label{eq_A}
A:=\frac1{q^d}\sum_{\ell\in\Pd}\binom{d}{\ell}^2\binom{q}{b(\ell)}. 
\end{equation}

\begin{conjecture}[\cite{Group2026}]\label{Conj1}
Let $p >d$. With notations as above, the inequality 
\begin{equation}\label{E : conj1}
\|(f \sigma)^{\vee}\|_{L^{2d}(\F_q^d, \d x)} \leq A^{1/2d} \|f\|_{L^{2}(\Gamma, \d\sigma)}
\end{equation}
holds for every $f:\Gamma\to\mathbb C$ and is sharp. Moreover, a nonzero $f$ is a maximizer of \eqref{E : conj1} if and only if $|f|$ is constant.
\end{conjecture}

Here $\d x$ denotes the counting measure on $\Fq^d$, so that
$$
\|F\|_{L^{2d}(\Fq^d,\d x)}^{2d} := \sum_{x\in\Fq^d}|F(x)|^{2d}.
$$
The constant $A$ admits a transparent probabilistic description, recorded in Proposition~\ref{Prop : prob form}; in particular $A < d!$, with $A \to d!$ as $q\to\infty$ for each fixed $d$ (see Remark~\ref{rmk : size of A}).

In \cite{Group2026} we established the above conjecture in two regimes: for $2\le d\le 20$ and for $q\ge \tfrac{d(d-1)}{2\log 6}+\tfrac{2d-1}{3}$. The argument was to reduce the  bound~\eqref{E : conj1} to a finite combinatorial verification, which was then carried out explicitly for $d\le20$ and via a formal proof for $q \geq \frac{d(d-1)}{2 \log 6} + \frac{2d-1}{3}$. The purpose of this article is to prove the following theorem, which settles the conjecture in full generality.

\begin{theorem}\label{T : main thm}
Let $d\ge2$ and $q=p^n$ be a prime power with $p>d$. Then Conjecture \ref{Conj1} holds: inequality \eqref{E : conj1} is valid, the constant $A^{1/2d}$ is optimal, and a nonzero $f$ attains equality if and only if $|f|$ is constant.
\end{theorem}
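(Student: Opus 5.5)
Expanding the $2d$-th power and summing over $x\in\Fq^d$ by orthogonality of characters gives
\[
\|(f\sigma)^\vee\|_{L^{2d}}^{2d}=\frac{q^d}{q^{2d}}\sum_{\substack{t_1,\dots,t_d,s_1,\dots,s_d\in\Fq\\ \sum_i\gamma(t_i)=\sum_i\gamma(s_i)}}\prod_{i=1}^d f(\gamma(t_i))\overline{f(\gamma(s_i))}.
\]
Since $p>d$, Newton's identities turn equality of the first $d$ power sums into equality of the elementary symmetric polynomials. Hence $(s_i)$ is a rearrangement of $(t_i)$, and conversely every rearrangement is a solution. Write $f_t=f(\gamma(t))$. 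Grouping tuples by multiset, the sum becomes $\sum_{M}|N(M)\prod_{t\in M} f_t|^2$, where $N(M)$ is the number of distinct orderings of the multiset $M$. Set $w_t:=|f_t|^2$ and normalize $\sum_t w_t=q$, i.e.\ $\|f\|_{L^2(\sigma)}=1$. Then
\[
\|(f\sigma)^\vee\|_{2d}^{2d}=q^{-d}\sum_{M}N(M)^2\prod_{t\in M}w_t=\sum_{(t_1,\dots,t_d)}N(\{t_i\})\prod_i\frac{w_{t_i}}{q}=\mathbb{E}_{\pi}\bigl[N(X_1,\dots,X_d)\bigr],
\]
where $X_1,\dots,X_d$ are i.i.d.\ with law $\pi_t=w_t/q$ on the $q$-element set $\Fq$. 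The left side depends only on $|f|$, so the theorem reduces to the following claim: the functional $\Phi(\pi):=\mathbb{E}_\pi[N]$ is uniquely maximized over the simplex $\Simp$ by the uniform distribution. One then checks that $\Phi(\mathrm{unif})=A$ by counting tuples by the multiplicity pattern $\ell$; this is exactly Proposition~\ref{Prop : prob form}. Sharpness and the characterization of maximizers ($|f|$ constant iff $\pi$ uniform) follow immediately.

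To prove the extremal claim, write $N=d!/\prod_t m_t!$, where $m_t$ is the multiplicity of $t$. Then
\[
\Phi(\pi)=\sum_{m_1+\dots+m_q=d}\frac{(d!)^2}{\prod_t (m_t!)^2}\prod_t\pi_t^{m_t}=(d!)^2\,[z^d]\prod_{t}I(\pi_t z),\qquad I(u)=\sum_{m\ge0}\frac{u^m}{(m!)^2}.
\]
$\Phi$ is a symmetric polynomial on $\Simp$, so the plan is a two-variable smoothing argument. Fix all coordinates except $\pi_a,\pi_b$ with $\pi_a+\pi_b=s$, and show that $g(x)=\Phi(\ldots,x,\ldots,s-x,\ldots)$ is maximized uniquely at $x=s/2$, and strictly increases toward it, whenever $\pi_a\neq\pi_b$. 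Compactness plus the strict improvement under averaging then forces the maximizer to be uniform and unique. Concretely, $g$ is a combination with nonnegative coefficients $c_k$ (coming from the other coordinates) of the functions $h_k(x)=\sum_{i+j=k}\frac{x^i(s-x)^j}{(i!j!)^2}$. It therefore suffices to show that each $h_k$ is symmetric about $s/2$ and is maximized there, i.e.\ that it is Schur-concave in $(x,s-x)$.

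The main obstacle is this last step. Writing $x=\frac{s}{2}+y$, one needs $h_k$ to be nonincreasing in $y^2$. My first attempt is to expand $h_k=\sum_{i+j=k}\binom{k}{i}^2 x^i(s-x)^j/(k!)^2$ and show that its Taylor coefficients in $y$ are alternating in a favorable sense. A cleaner approach is to show that $\sum_i\binom{k}{i}^2u^iv^{k-i}$, a Legendre-type polynomial $(u-v)^kP_k\bigl(\frac{u+v}{u-v}\bigr)$, is Schur-concave by checking the Schur--Ostrowski condition $(u-v)(\partial_u-\partial_v)\le0$. This derivative identity should reduce to a coefficient comparison: terms of the form $\binom{k}{i}^2 i-\binom{k}{i-1}^2(k-i+1)$ pair up antisymmetrically. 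Strictness for $u\neq v$ should follow since the coefficients are positive for $k\ge2$. Because some $c_k$ with $k\ge2$ is positive when $d\ge2$, strict improvement holds, which gives uniqueness.
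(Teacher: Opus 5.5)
Your proof is correct, and its skeleton is the paper's. You use the same orthogonality/Newton reduction, though you bypass the form $Q$ and the sums $\Sigma_\ell$ by grouping tuples by multiset, which is more direct. You use the same two-point averaging and strictness mechanism. You also work with the same one-variable object: with your $s=\pi_a+\pi_b$ playing the role of the paper's $m$, one has $h_k(x)=\frac{s^k}{(k!)^2}H_k(x/s)$. Moreover, $c_k h_k(\pi_a)$ is exactly the contribution of the event $\{S=k\}$ to the paper's $\E_{\tilde\theta}[M(\widetilde T)H_S(\rho)]$, and your $c_d=(d!)^2>0$ is its event $\{S=d\}$.

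The differences are in execution. Your generating function $\prod_t I(\pi_t z)$ produces the two-letter splitting in one line. It replaces the factorization $M(T)=\binom{S}{N_a}M(\widetilde T)$ and the lumping lemma.

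For the key lemma, the paper proves the closed form $H_s(\rho)=\sum_j\binom{s}{2j}\binom{2j}{j}(\rho(1-\rho))^j$ through the Laplace-type integral $\frac1{2\pi}\int_0^{2\pi}(1+2\sqrt{c}\cos\varphi)^s\,\d\varphi$. This exhibits nonnegative coefficients in $c$ and the link to Legendre polynomials. Your Schur--Ostrowski route is instead a finite coefficient check.

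The step you left as ``should reduce'' does go through. Set $F(u,v):=\sum_i\binom{k}{i}^2u^iv^{k-i}$. Since $i\binom{k}{i}=(k-i+1)\binom{k}{i-1}$, the coefficient $\binom{k}{i}^2 i-\binom{k}{i-1}^2(k-i+1)$ of $u^{i-1}v^{k-i}$ in $(\partial_u-\partial_v)F$ equals $\binom{k}{i}\binom{k}{i-1}(k+1-2i)$. This is antisymmetric under $i\mapsto k+1-i$, so
\[
(u-v)(\partial_u-\partial_v)F=-\sum_{1\le i<(k+1)/2}\binom{k}{i}\binom{k}{i-1}(k+1-2i)\,(uv)^{i-1}(u-v)\bigl(u^{k+1-2i}-v^{k+1-2i}\bigr).
\]
The $i=1$ term, $k(k-1)(u-v)(u^{k-1}-v^{k-1})$, makes this strictly negative for $u\ne v$ once $k\ge 2$. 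Note that only the coefficients with $i<(k+1)/2$ are positive, not all of them, but that is exactly what the pairing needs.

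This gives strict unimodality of $h_k$ about $s/2$, which is all the paper uses. Monotonicity in $c\in[0,1/4]$ is the same as monotonicity in $\rho\in[0,1/2]$, by the symmetry $\rho\mapsto 1-\rho$. You simply do not obtain the explicit expansion in $c$.
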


Our route to Theorem~\ref{T : main thm} passes through a reformulation after which neither the moment curve nor the field $\Fq$ plays any role. Let $\mathcal X$ be a finite set, let $\theta$ be a probability distribution on $\mathcal X$, and let $T=(T_1,\dots,T_d)$ be i.i.d.\@ with law $\theta$. Write $M(T)$ for the number of distinct rearrangements of the sample $T$.

\begin{theorem}\label{T : prob thm}
Let $d\ge2$ and let $\mathcal X$ be a finite set. Then
$$
\E_\theta\big[M(T)\big]\le\E_{\theta_0}\big[M(T)\big]
$$
for every probability distribution $\theta$ on $\mathcal X$, where $\theta_0$ denotes the uniform distribution, with equality if and only if $\theta=\theta_0$.
\end{theorem}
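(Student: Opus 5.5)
The plan is to turn $\E_\theta[M(T)]$ into an explicit polynomial in the weights $\theta_x$, and then show that averaging any two weights strictly increases it unless they are already equal. Compactness of the simplex then forces the maximizer to be uniform.

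\emph{Step 1: the polynomial.} For a count vector $m=(m_x)_{x\in\mathcal X}$ with $|m|=d$, the probability that the sample has counts $m$ is $\binom{d}{m}\theta^m$. On that event $M(T)=\binom{d}{m}=d!/\prod_x m_x!$. Hence
$$
F(\theta):=\E_\theta[M(T)]=\sum_{|m|=d}\binom{d}{m}^2\theta^m=(d!)^2\sum_{|m|=d}\prod_{x\in\mathcal X}\frac{\theta_x^{m_x}}{(m_x!)^2}.
$$
This is a symmetric polynomial with nonnegative coefficients. (This is consistent with the shape of \eqref{eq_A}.)

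\emph{Step 2: two-point reduction.} Fix two distinct points $x_1,x_2\in\mathcal X$, write $a=\theta_{x_1}$, $b=\theta_{x_2}$, and freeze all other coordinates. Grouping terms by $j=m_{x_1}+m_{x_2}$ gives
$$
F=\sum_{j=0}^d c_j\,\frac{1}{(j!)^2}\,g_j(a,b),\qquad g_j(a,b):=\sum_{k=0}^j\binom{j}{k}^2a^kb^{j-k}.
$$
Here $c_j\ge0$ depends only on the frozen coordinates. Moreover $c_d=(d!)^2>0$, coming from the term in which all other $m_x$ vanish.

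The key input is the classical identity
$$
\sum_{k=0}^j\binom{j}{k}^2a^kb^{j-k}=\sum_{i=0}^{\lfloor j/2\rfloor}\frac{j!}{i!\,i!\,(j-2i)!}\,(ab)^i(a+b)^{j-2i}.
$$
I would prove it by comparing coefficients of $a^kb^{j-k}$. Alternatively, it follows combinatorially: a pair of $k$-subsets of $[j]$ is encoded by $i$ positions in the first set only, $i$ in the second only, and the remaining $j-2i$ positions each split between the two sets (contributing a factor of $a$ or $b$). For small $j$ it is immediate: $g_2=(a+b)^2+2ab$ and $g_3=(a+b)^3+6ab(a+b)$.

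\emph{Step 3: monotonicity in $ab$.} With $s=a+b$ fixed, each $g_j$ is a nondecreasing function of $ab$. For $j\ge2$ it is strictly increasing, since the $i=1$ coefficient $j(j-1)$ is positive. Now $ab$ is maximized, uniquely, at $a=b=s/2$. Therefore replacing $(a,b)$ by $(s/2,s/2)$ does not decrease $F$. It strictly increases $F$ whenever $a\ne b$, because the $j=d\ge2$ term alone has $c_d>0$.

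\emph{Step 4: conclusion.} $F$ is continuous on the compact simplex, so it attains its maximum at some $\theta^*$. If two coordinates of $\theta^*$ differed, Step 3 would produce a strictly larger value, which is impossible. So $\theta^*=\theta_0$. For any $\theta\ne\theta_0$ we have $F(\theta)\le F(\theta_0)$, and equality would make $\theta$ a maximizer with two unequal coordinates, contradicting Step 3. This gives both the inequality and the equality case. (If $|\mathcal X|=1$ the statement is trivial.)

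The only nontrivial step is the identity in Step 2, which expresses $g_j$ with nonnegative coefficients in terms of $ab$ and $a+b$. I would verify it first by the subset-pair encoding. If that encoding proves awkward, I would instead extract it from the generating function $\sum_j g_j(a,b)t^j/(j!)^2=I(at)I(bt)$, where $I(w)=\sum_k w^k/(k!)^2$.
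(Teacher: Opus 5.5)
Your proposal is correct and is essentially the paper's argument: grouping by $j=m_{x_1}+m_{x_2}$ is the algebraic form of the paper's merging of two letters (indeed $g_j(a,b)=(a+b)^j H_j\big(a/(a+b)\big)$), your identity is the homogenized form of \eqref{E : pos coeff}, and your $c_d>0$ term plays the role of the event $S=d$ in the proof of Proposition~\ref{prop:pinch}. The only difference is in how the identity is proved: you count pairs of $k$-subsets (or compare coefficients and apply Vandermonde), while the paper uses the integral representation \eqref{eq:intrep}; both are valid.
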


For $\mathcal X=\Fq$, Theorem~\ref{T : prob thm} establishes validity of Conjecture~\ref{Conj1}; see Corollary~\ref{cor:equivalence}. Under this correspondence, the sharp constant is $A=\E_{\theta_0}[M(T)]$, and the equality case $\theta=\theta_0$ is exactly the condition that $|f|$ is constant. We prove Theorem~\ref{T : prob thm} by a two-point symmetrization: moving the masses at two letters of the alphabet towards one another strictly increases $\E_\theta[M(T)]$, so the uniform distribution is the only maximizer. Two-point arguments of this kind underlie Beckner's proof of the sharp Hausdorff-Young inequality~\cite{Beckner1975}, and here they replace the finite combinatorial verification of~\cite{Group2026}.

The study of optimal constants and maximizers for Fourier extension inequalities has a rich history in the euclidean setting. Foundational results are due to Foschi~\cite{Foschi2007}, who identified the maximizers of the Strichartz inequalities for the paraboloid (in dimensions $1$ and $2$) and the cone (in dimensions $2$ and $3$), and to  Christ--Shao~\cite{christ2012existence, christ2012extremizers}, who established the existence of maximizers for  the endpoint Stein--Tomas inequality on $\mathbb{S}^2$; see also Frank--Lieb--Sabin~\cite{FrankLiebSabin2016} for a sharp precompactness criterion for optimizing sequences. The subject has since flourished; see, for instance~\cite{CarneiroOeSSousa2019,FoschiSphere2015,GoncalvesNegro2022,Negro2023, NegroOeSStovallTautges2025}, and we refer to the survey~\cite{FoschiOeS2017} for a broader account.

Sharp questions of this type are particularly natural for the moment curve, whose euclidean restriction theory was settled by Drury~\cite{Drury1985} and which underlies the resolution of the Vinogradov mean value theorem~\cite{BDG2016}; see also~\cite{GLYZ2021}. There, however, the optimal constant is not known in any dimension: Biswas and Stovall established the existence of
maximizers~\cite{BiswasStovall2023} and obtained sharp restriction estimates for monomial curves~\cite{BiswasStovall2025}, but the maximizers themselves remain elusive. This is one of the motivations for studying the finite field model, where the arithmetic structure (see Section~\ref{subsec: newton identity}) makes exact computations possible.

The extension problem over finite fields was initiated by Mockenhaupt and Tao~\cite{MockenhauptTao2004}, in part because of its close relationship with the Kakeya and restriction problems~\cite{Dvir2009} (see also \cite[page 39]{MockenhauptTao2004}). Since then, substantial literature has developed both at the endpoint~\cite{LewkoLewko2012} and beyond it~\cite{IosevichKohLewko2020,KohLeePham2022, Lewko2015, Lewko2019, RudnevShkredov2018}. The two lines of research intersected only very recently, when Gonz\'alez-Riquelme and Oliveira e Silva~\cite{RuquelmeSilva2024} determined the optimal constants and the maximizers for the parabola, the paraboloid,  the hyperbolic paraboloid, and certain cones; further cones were subsequently settled by Gonz\'alez-Riquelme and Ismoilov~\cite{GonzalezIsmoilov2026}, and the moment curve was taken up in~\cite{Group2026}. With the sole exception of~\cite{Group2026}, all of the works just mentioned concern hypersurfaces. Our Theorem~\ref{T : main thm} completely resolves the optimal constant and the uniqueness of maximizers for the moment curve in every dimension.

\subsection{Outline of the proof}

The proof has three main parts.

\begin{itemize}[leftmargin=1.6em]
\item \textbf{Section~3.} For the convenience of the reader, and to keep the present account self-contained, we recall from \cite[Eq.~2.10]{Group2026} the reduction of Conjecture \ref{Conj1} to the equivalent statement that a certain explicit symmetric form $Q(x)$ in $q$ nonnegative real variables $x=(x_\xi)_{\xi\in\Fq}$ is nonnegative (with equality occurring only for constant vectors).

\item \textbf{Section~4.} We show that $Q(x)\ge0$ is exactly the statement that a certain expectation, taken over a probability distribution $\theta$ obtained
by normalizing $x$, is maximized when $\theta$ is uniform. Precisely, if $T=(T_1,\dots,T_d)$ are i.i.d.\@ draws from a distribution $\theta$ on a $q$-element set and $M(T)$ denotes the number of distinct orderings of the sample $T$, then $Q(x)\ge0$ holds for all $x$, with equality only at constant $x$, if and only if
$$
\E_\theta[M(T)]\le \E_{\theta_0}[M(T)]\qquad\text{for every probability
vector }\theta,\text{ with equality only at }\theta=\theta_0.
$$

\item \textbf{Section~5.} We prove this purely probabilistic statement. The proof merges two letters $a\ne b$ of the alphabet into one, which factors $M(T)$ exactly as a product of a binomial coefficient $\binom{S}{N_a}$ (where $S=N_a+N_b$) and the rearrangement count $M(\widetilde T)$ of the merged sample; taking expectations isolates the dependence on how the mass $\theta_a+\theta_b$ splits between $a$ and $b$ into a single explicit function $H_s (\rho)=\E\big[\binom{s}{K}\big]$, where $K\sim\Bin(s,\rho)$. We then prove that $H_s (\rho)$ is a polynomial in $c:=\rho (1 - \rho)$ with nonnegative coefficients, hence is maximized exactly at $\rho = \tfrac12$ (for $s\ge2$, uniquely so). In summary, averaging two distinct coordinates of $\theta$ therefore strictly increases $\E_\theta[M(T)]$ whenever $d\ge2$. This forces any maximizer of $\E_\theta[M(T)]$ over the probability simplex to be $\theta_0$.
\end{itemize}

\section{Notation: partitions, patterns, and symmetric sums}\label{sec : notation}
Following~\cite{Group2026}, in this section, we introduce some notation and terminology that will be used throughout the article.

We first recall the partition set $\Pd$ and the counting vector $b(\ell)$ from Section~\ref{Sec : intro}. For $\ell\in\Pd$, set (with the convention $0!=1$, so trailing zero entries do not affect these)
$$
\binom{d}{\ell}:=\frac{d!}{\ell_1!\,\ell_2!\cdots\ell_d!},\qquad
\binom{q}{b(\ell)}:=\frac{q!}{b_0(\ell)!\,b_1(\ell)!\cdots b_d(\ell)!}.
$$
More generally, for $n=(n_\xi)_{\xi\in\Fq}\in\mathbb Z_{\ge0}^{\Fq}$ with $|n|:=\sum_\xi n_\xi=d$, we will use $\binom{d}{n}:=d!/\prod_\xi n_\xi!$.

\begin{definition}\label{def:pattern function}
We say that a nonnegative function $\alpha:\Fq \to \Z$ is a \emph{pattern-$\ell$ exponent} if exactly $b_j(\ell)$ of its values equal $j$ for each $0 \le j\le d$. For $x=(x_\xi)_{\xi\in\Fq}$, we write
$$
x^\alpha:=\prod_{\xi\in\Fq}x_\xi^{\alpha(\xi)} \qtq{and} m_\ell(x):=\sum_{\alpha\text{ pattern-}\ell}x^\alpha.
$$
Fix once and for all an enumeration $\Fq=\{\xi_1,\dots,\xi_q\}$, and let $S_q:=\Sym(\{1,\dots,q\})$ denote the symmetric group on $q$ letters. The \emph{$\ell$-th symmetric sum} is defined as
$$
\Sigma_\ell(x):=\sum_{\tau\in S_q}x_{\xi_{\tau(1)}}^{\ell_1}
x_{\xi_{\tau(2)}}^{\ell_2}\cdots x_{\xi_{\tau(d)}}^{\ell_d}.
$$
\end{definition}

Informally, one may think of a pattern-$\ell$ exponent function $\alpha$ as having the same distribution as $\ell$. Different pattern-$\ell$ exponents give different monomials $x^\alpha$, so $m_\ell(x)$ is a sum of distinct monomials. Note that there are $\binom{q}{b(\ell)}$ many pattern-$\ell$ exponents. A simple counting argument gives
\begin{equation}\label{E : sigma_l v m_l}
\Sigma_\ell(x) =  b(\ell)! \, m_\ell(x).
\end{equation}
Furthermore, for $t=(t_1,\dots,t_d)\in\Fq^d$, define its count vector by  $r(t)=(r(t)_\xi)_{\xi\in\Fq}\in \mathbb Z_{\ge0}^{\Fq}$, where 
\begin{equation}\label{E : def count vec}
r(t)_\xi:=\#\{i:t_i=\xi\}.
\end{equation}

\begin{remark}
The terminology above is only for convenience: a pattern-$\ell$ exponent is a rearrangement of the partition $\ell$ padded with $q-d$ zeros, so that $m_\ell$ is the \emph{monomial symmetric polynomial} attached to the partition $\ell$ in the variables $(x_\xi)_{\xi\in\Fq}$, in the standard notation of \cite[Ch.~I, \S2]{Macdonald1995}. In the notation $\ell=(1^{b_1(\ell)}2^{b_2(\ell)}\cdots)$ of \cite[Ch.~I, \S1]{Macdonald1995}, the entries of $b(\ell)$ are the multiplicities of the parts of $\ell$, and $\binom{q}{b(\ell)}$ is the size of the $S_q$-orbit of the corresponding exponent vector.
\end{remark}

We note the following two identities that will be used in the next section.
\begin{lemma}\label{L : m_l v eta}
For each $x = (x_\xi)_{\xi \in \Fq}$, the following holds.
\begin{enumerate}[label=(\roman*)]
\item $\displaystyle\sum_{\ell\in\Pd}\binom{d}{\ell}m_\ell(x)=(\sum_{\xi\in\Fq}x_\xi)^d$;
\item $\displaystyle\sum_{\ell\in\Pd}\binom{d}{\ell}^2m_\ell(x)
=\sum_{t\in\Fq^d}\binom{d}{r(t)}\prod_{i=1}^dx_{t_i}$.
\end{enumerate}
\end{lemma}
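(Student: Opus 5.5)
Both identities come from expanding a sum over $t\in\Fq^d$ and grouping the terms according to the count vector $r(t)$. The grouping rests on one bookkeeping fact. As $t$ ranges over $\Fq^d$, the count vector $n=r(t)$ ranges over all $n\in\mathbb Z_{\ge0}^{\Fq}$ with $|n|=d$. Each such $n$ arises from exactly $\binom{d}{n}$ tuples $t$, and for each of them $\prod_{i=1}^d x_{t_i}=x^n$. Moreover, sorting the nonzero values of $n$ in non-increasing order and padding with zeros gives a unique $\ell\in\Pd$ for which $n$ is a pattern-$\ell$ exponent. Since $\binom{d}{n}$ depends only on the multiset of values of $n$, it equals $\binom{d}{\ell}$. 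So the set of such $n$ is the disjoint union over $\ell\in\Pd$ of the pattern-$\ell$ exponents, and $\binom{d}{n}=\binom{d}{\ell}$ on the $\ell$-th piece.

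For (i), expand the power:
$$\Big(\sum_{\xi\in\Fq}x_\xi\Big)^d=\sum_{t\in\Fq^d}\prod_{i=1}^d x_{t_i}.$$
Grouping by $n=r(t)$ turns this into $\sum_{|n|=d}\binom{d}{n}x^n$, which is just the multinomial theorem. Splitting the range of $n$ by pattern as above gives
$$\sum_{\ell\in\Pd}\binom{d}{\ell}\sum_{\alpha\text{ pattern-}\ell}x^\alpha=\sum_{\ell\in\Pd}\binom{d}{\ell}m_\ell(x).$$

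For (ii), apply the same grouping to the right-hand side. Now the weight $\binom{d}{r(t)}$ is constant on each fibre $\{t:r(t)=n\}$, which has $\binom{d}{n}$ elements, so the sum becomes $\sum_{|n|=d}\binom{d}{n}^2x^n$. The pattern decomposition then yields $\sum_{\ell\in\Pd}\binom{d}{\ell}^2 m_\ell(x)$.

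No step is a genuine obstacle. The only point needing care is that the correspondence between count vectors and pairs $(\ell,\alpha)$ is a bijection: the partition $\ell$ is determined by $n$, and no exponent is counted under two different partitions. This holds because different pattern-$\ell$ exponents give distinct monomials (as noted after Definition~\ref{def:pattern function}), and $b_0(\ell)$ correctly absorbs the $q-d$ padding zeros.
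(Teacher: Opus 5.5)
Your proposal is correct and follows the paper's proof essentially verbatim. You expand $(\sum_\xi x_\xi)^d$ (resp.\ the right side of (ii)) over $t\in\Fq^d$ and group by count vector to get $\sum_{|n|=d}\binom{d}{n}x^n$ (resp.\ $\sum_{|n|=d}\binom{d}{n}^2x^n$), then split the count vectors by the partition $\ell$ obtained from sorting their nonzero values, using $\binom{d}{n}=\binom{d}{\ell}$. One small quibble: in your last paragraph, the reason no exponent is counted under two partitions is that $\ell$ is determined by the multiset of values of $n$, as you argued in your first paragraph; it is not the distinctness of the monomials $x^\alpha$, which plays no role since $m_\ell$ is a sum indexed by exponents.
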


\begin{proof}
First, we prove (i). We have
$$
\Big(\sum_{\xi\in\Fq}x_\xi\Big)^d=\sum_{t=(t_1,\dots,t_d)\in\Fq^d} x^{r(t)} = \sum_{r \in\mathbb Z_{\ge0}^{\Fq} :\,|r|=d}\binom{d}{r}\,x^r.
$$
For each $r$ in the above sum, let $\ell(r)\in\Pd$ be the non-increasing rearrangement of the nonzero values of $r$, padded with zeros to a length of $d$. We have $\binom{d}{r}=\binom{d}{\ell(r)}$. Moreover, for a fixed $\ell$, the set of $r$ with $\ell(r)=\ell$ is exactly the set of pattern-$\ell$ exponents, so the above RHS is
$$
\sum_{\ell\in\Pd}\ \sum_{r :\,\ell(r)=\ell}\binom{d}{\ell} x^r
=\sum_{\ell\in\Pd}\binom{d}{\ell}\sum_{\alpha\text{ pattern-}\ell}x^\alpha
=\sum_{\ell\in\Pd}\binom{d}{\ell}\,m_\ell(x).
$$
This establishes (i).

We proceed to establish (ii). Arguing as in the previous part, we obtain
$$
\sum_{t\in\Fq^d}\binom{d}{r(t)}\prod_i x_{t_i}
=\sum_{t\in\Fq^d}\binom{d}{r(t)} x^{r(t)}
=\sum_{r \in\mathbb Z_{\ge0}^{\Fq} :\,|r|=d}\binom{d}{r}^2 x^r,
$$
and, by the same reasoning as above, the above RHS can be rewritten as
$$
\sum_{\ell\in\Pd}\binom{d}{\ell}^2\, m_\ell(x),
$$
finishing the proof.
\end{proof}

\section{Reduction to a positivity statement}

In this section, we show that Conjecture~\ref{Conj1} is equivalent to the non-negativity of a certain symmetric form, together with an explicit description of its zero set; see Proposition~\ref{Prop : equiv}. This corresponds to the content of \cite[\S2.1--\S2.2]{Group2026}. For the reader’s convenience, we reproduce the derivation here.

\subsection{The \texorpdfstring{$L^{2d}$}{L\^{}2d} norm as a sum over fibers}
First, we note that
\begin{equation}\label{E : ortho-expan}
\|(f\sigma)^\vee\|_{L^{2d}(\Fq^d,\d x)}^{2d}
=\frac1{q^d}\sum_{\zeta\in\Fq^d}\Big|\sum_{\substack{\xi_1+\dots+\xi_d=\zeta\\
\xi_i\in\Gamma}}\ \prod_{i=1}^d f(\xi_i)\Big|^2.
\end{equation}
We use an argument in the proof of \cite[Proposition 2.1]{RuquelmeSilva2024} that utilizes the orthogonality of additive characters of finite fields. Indeed, for each fixed $x$, we have
$$
q^{2d}\,|(f\sigma)^\vee(x)|^{2d}=\sum_{\xi_1,\dots,\xi_d\in\Gamma}\ \sum_{\eta_1,
\dots,\eta_d\in\Gamma}\Big(\prod_if(\xi_i)\Big)\Big(\prod_i\overline{f(\eta_i)}
\Big)\,e\Big(x\cdot\big(\textstyle\sum_i\xi_i-\sum_i\eta_i\big)\Big).
$$
Summing over $x$ and then applying the orthogonality of additive characters of finite fields (see~\cite[Chapter 5, \S1]{Lidlbook1997}), we get
$$
\sum_{x\in\Fq^d}|(f\sigma)^\vee(x)|^{2d}=\frac1{q^d}\!\!\sum_{\substack{
\xi_1,\dots,\xi_d,\eta_1,\dots,\eta_d\in\Gamma\\ \sum_i\xi_i=\sum_i\eta_i}}
\!\!\Big(\prod_if(\xi_i)\Big)\Big(\prod_i\overline{f(\eta_i)}\Big).
$$
For each fixed $\zeta\in\Fq^d$, the tuples $\xi$ with
$\sum_i\xi_i=\zeta$ and the tuples $\eta$ with $\sum_i\eta_i=\zeta$ range independently over the same set $\{t\in\Gamma^d:\sum_it_i=\zeta\}$. Thus, the above RHS is
$$
\frac1{q^d} \sum_{\zeta\in\Fq^d}\ \Big(\sum_{\sum_i\xi_i=\zeta}\prod_if(\xi_i)\Big)
\Big(\sum_{\sum_i\eta_i=\zeta}\prod_i\overline{f(\eta_i)}\Big)
= \frac1{q^d} \sum_{\zeta\in\Fq^d}\Big|\sum_{\substack{\xi_1+\dots+\xi_d=\zeta\\\xi_i\in
\Gamma}}\prod_if(\xi_i)\Big|^2
$$
establishing~\eqref{E : ortho-expan}.

\subsection{Newton's identities and the fiber structure}\label{subsec: newton identity}

This leads us to examine in more detail the index set appearing in the summand on RHS\eqref{E : ortho-expan}. Fixing $\zeta\in\Fq^d$, we consider the solutions $(t_1,\dots,t_d)\in \Fq^d$ of the system
\begin{equation}\label{E : system}
t_1^k+t_2^k+\dots+t_d^k=\zeta_k,\qquad k=1,\dots,d.
\end{equation}
For $t=(t_1,\dots,t_d)\in\Fq^d$, let $p_k(t):=\sum_it_i^k$ denote the $k$-th power sum and let $e_0(t):=1,e_1(t),\dots,e_d(t)$ be the elementary symmetric polynomials defined by expanding the product
\begin{equation}\label{E : elem-sym}
\prod_{i=1}^d(X-t_i)
=X^d-e_1(t)X^{d-1}+e_2(t)X^{d-2}-\dots+(-1)^de_d(t).
\end{equation}
We recall that the classical Newton identities (see \cite[page 81]{Waerden1953}) relate the power sums $p_k$ to the elementary symmetric polynomials via
\begin{equation}\label{eq:newton}
p_k(t)=\sum_{i=1}^{k-1}(-1)^{i-1}e_i(t)\,p_{k-i}(t)+(-1)^{k-1}k\,e_k(t),
\qquad k=1,\dots,d
\end{equation}
(for $k = 1$, the sum on the right is empty; hence $0$, giving $p_1(t)=e_1(t)$).

Our strategy is to associate each $\zeta \in \Fq^d$ with a polynomial analogous to RHS\eqref{E : elem-sym}, and then compare it with the polynomial in \eqref{E : elem-sym} associated with a solution $t \in \Fq^d$ of \eqref{E : system}. To this end, we now imitate the above identity to construct a bijection $\Psi:\Fq^d\to\Fq^d$. For $\zeta \in \Fq^d$, we define $\Psi(\zeta) := (\eta_1,\dots,\eta_d)$ recursively as follows. First, set $\eta_1 := \zeta_1$, and then, for $1<k\le d$, assuming that $\eta_1,\dots,\eta_{k-1}$ have already been uniquely determined, we define $\eta_k$ via
\begin{equation}\label{E : def eta_k}
\zeta_k =: \sum_{i=1}^{k-1}(-1)^{i-1} \eta_i\,\zeta_{k-i}+(-1)^{k-1}k\, \eta_k.
\end{equation}
Finally, we consider the map $\zeta\mapsto P_\zeta$, where 
$$
P_\zeta(X):=X^d - \eta_1X^{d-1}+\eta_2X^{d-2}-\dots+(-1)^d \eta_d.
$$

We observe that $t=(t_1,\dots,t_d)\in\Fq^d$ solves the system~\eqref{E : system} precisely when $P_\zeta(X)=\prod_{i=1}^d(X-t_i)$. This is a direct consequence of the bijectivity of $\Psi$, which holds since $p > d$  and thus each $1 \leq k \leq d$ is invertible in $\Fq$. Indeed, $t$ satisfies~\eqref{E : system} if and only if $p_i(t)=\zeta_i$, which is via $\Psi$ equivalent to $e_i(t)=\eta_i$, thereby proving the assertion.

Looking back at RHS\eqref{E : ortho-expan} and in view of the above observation, we now look at the set of all $\zeta \in \Fq^d$ for which the polynomial $P_\zeta$ splits completely over $\Fq$. Write $P_\zeta(X)=\prod_{r=1}^{k}(X-c_r)^{\ell_r}$ with the  $c_r$ distinct and $\ell_1\ge\dots\ge\ell_k\ge1$. Note that $\sum_r\ell_r=d$. We pad $(\ell_1,\dots,\ell_k)$ with $d-k$ trailing zeros to obtain a $d$-tuple $\ell=(\ell_1,\dots,\ell_k,0,\dots,0)\in\Pd$.

\begin{definition}\label{def:Wl}
For $\ell\in\Pd$, let $W_\ell\subset\Fq^d$ be the set of all $\zeta \in \Fq^d$ such that $P_\zeta$ splits completely over $\Fq$ with the multiplicity pattern $\ell$. 
\end{definition}

For $\ell\in\Pd$ and $\zeta\in W_\ell$, suppose that $P_\zeta=\prod_{r=1}^k (X-c_r)^{\ell_r}$. Then the solution set of system \eqref{E : system} is precisely the set of all orderings of the multiset that contains $\ell_r$ copies of $c_r$ for each $r=1,\dots,k$. Note that there are $\binom{d}{\ell}$ such orderings. For any $\zeta\in W_\ell$ and $f:\Gamma\to\mathbb C$, every $t$ solving \eqref{E : system} produces, via $\xi_i:=\gamma(t_i)$, the same value of the product $\prod_{i=1}^d f(\xi_i)$. We therefore set
$$
\pi_\zeta(f):=\prod_{i=1}^d f(\gamma(t_i))\qquad\text{for any solution } t \,\, \text{of} \,\, \eqref{E : system}.
$$

Next, for each fixed $\zeta\in W_\ell$, consider the associated nonnegative function $\Phi_\zeta: \Fq\to \Z$ by letting $\Phi_\zeta(\xi)$ be the multiplicity of $\xi$ as a root of $P_\zeta$. Observe that the assignment $\zeta \mapsto \Phi_\zeta$ is a bijection from $W_\ell$ onto the set of pattern-$\ell$ exponents; in particular, we have $|W_\ell|=\binom{q}{b(\ell)}$. Moreover, for every function $f:\Gamma\to\mathbb C$ and each point $\zeta\in W_\ell$, we obtain
\[
|\pi_\zeta(f)|^2 = x(f)^{\Phi_\zeta},
\]
where $x(f):=\big(|f(\gamma(\xi))|^2\big)_{\xi\in\Fq}\in[0,\infty)^{\Fq}$. Consequently, it follows that
$$
\sum_{\zeta\in W_\ell}|\pi_\zeta(f)|^2 = m_\ell(x(f)).
$$
In conclusion, combining this with  RHS\eqref{E : ortho-expan} we deduce that
\begin{equation}\label{E : LHS-final}
\|(f\sigma)^\vee\|_{2d}^{2d}=\frac1{q^d}\sum_{\ell\in\Pd}\binom{d}{\ell}^2 m_\ell(x(f)).
\end{equation}

\begin{remark}\label{rmk : modulus}
The above shows that $\|(f\sigma)^\vee\|_{L^{2d}(\Fq^d,\d x)}$ depends on $f$ only through $|f|$. The reason is that no cancellation can occur in the fiber sums of~\eqref{E : ortho-expan}: For $\zeta\in W_\ell$, all $\binom{d}{\ell}$ solutions of~\eqref{E : system} are reorderings of each other, so the inner sum is $\binom{d}{\ell}$ times the single complex number $\pi_\zeta(f)$. This is special to the exponent $2d$, at which the fibers of $(\xi_1, \dots, \xi_d) \mapsto \sum_i \xi_i$ over $\Gamma$ consist of a single $S_d$-orbit. It also explains why the maximizers in Theorem~\ref{T : main thm} form such a large set, in contrast with the euclidean setting, where they usually constitute a finite-dimensional family generated by the symmetries of the problem.
\end{remark}

\subsection{The equivalence}

For each $\ell\in\Pd$, set
$$
\omega_\ell:=\binom{d}{\ell}\binom{q}{b(\ell)}\Big(A-\binom{d}{\ell}\Big),
$$
and for $x\in[0,\infty)^{\Fq}$, define
$$
Q(x):=\sum_{\ell\in\Pd}\omega_\ell\,\Sigma_\ell(x),
$$
where $A$ is as introduced in \eqref{eq_A}. Then we obtain the following equivalence.

\begin{proposition}\label{Prop : equiv}
Let $d\ge2$ and $p>d$. Then, inequality \eqref{E : conj1} holds for every $f:\Gamma\to\mathbb C$ if and
only if $Q(x)\ge0$ for every $x\in[0,\infty)^{\Fq}$. Furthermore, if this holds, a nonzero $f$ attains equality in \eqref{E : conj1} if and only if $Q(x(f))=0$.
\end{proposition}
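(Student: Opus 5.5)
Set $x=x(f)$. Since both sides of \eqref{E : conj1} are homogeneous of degree $1$ in $f$, and both depend only on $x$, the natural move is to raise the inequality to the power $2d$. Using \eqref{E : LHS-final}, the left side becomes $q^{-d}\sum_\ell \binom{d}{\ell}^2 m_\ell(x)$. For the right side, $\|f\|_{L^2(\Gamma,\d\sigma)}^{2d}=q^{-d}(\sum_\xi x_\xi)^d$, and by Lemma~\ref{L : m_l v eta}(i) this equals $q^{-d}\sum_\ell\binom{d}{\ell}m_\ell(x)$. So \eqref{E : conj1} for $f$ is equivalent to
$$
\sum_{\ell\in\Pd}\binom{d}{\ell}\Big(A-\binom{d}{\ell}\Big)m_\ell(x)\ge 0,
$$
and equality in \eqref{E : conj1} for $f$ is equivalent to equality here.

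Next I would rewrite this in terms of $\Sigma_\ell$ via \eqref{E : sigma_l v m_l}, $m_\ell=\Sigma_\ell/b(\ell)!$. We have
$$
\frac{1}{b(\ell)!}=\frac{1}{q!}\binom{q}{b(\ell)},
$$
so the left side above equals
$$
\frac{1}{q!}\sum_\ell\binom{d}{\ell}\binom{q}{b(\ell)}\Big(A-\binom{d}{\ell}\Big)\Sigma_\ell(x)=\frac{Q(x)}{q!}.
$$
Hence for every $f$,
$$
A\,\|f\|_{L^2}^{2d}-\|(f\sigma)^\vee\|_{2d}^{2d}=\frac{Q(x(f))}{q^d\,q!}.
$$

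This identity gives both claims directly. Every $x\in[0,\infty)^{\Fq}$ arises as $x(f)$, by taking $f(\gamma(\xi))=\sqrt{x_\xi}$, and $\gamma$ is injective so $f$ is well defined on $\Gamma$. Therefore inequality \eqref{E : conj1} holding for all $f$ is equivalent to $Q\ge0$ on the whole orthant. The equality characterization follows from the same identity, because for nonzero $f$ equality holds exactly when $Q(x(f))=0$.

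No step is a real obstacle. The only point needing care is the bookkeeping constant $b(\ell)!\binom{q}{b(\ell)}=q!$, which is exactly why $\binom{q}{b(\ell)}$ appears in the definition of $\omega_\ell$.
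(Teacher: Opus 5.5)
Your proposal is correct and follows essentially the same route as the paper: you raise \eqref{E : conj1} to the power $2d$, express both sides via \eqref{E : LHS-final} and Lemma~\ref{L : m_l v eta}(i), and pass to $\Sigma_\ell$ using \eqref{E : sigma_l v m_l}. You also spell out two points the paper leaves implicit: the exact identity $A\|f\|_{L^2}^{2d}-\|(f\sigma)^\vee\|_{2d}^{2d}=Q(x(f))/(q^d\,q!)$, and the surjectivity of $f\mapsto x(f)$ onto $[0,\infty)^{\Fq}$.
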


\begin{proof}
Let $f:\Gamma\to\mathbb C$ and define $x:=x(f)\in[0,\infty)^{\Fq}$. Looking back at RHS\eqref{E : conj1}, Lemma~\ref{L : m_l v eta}(i) yields
$$
\|f\|_{L^2(\sigma)}^{2d} = \frac1{q^d} \sum_{\ell\in\Pd} \binom{d}{\ell} m_\ell(x).
$$
Hence, combining this with identity~\eqref{E : LHS-final} and identity~\eqref{E : sigma_l v m_l}, we see that \eqref{E : conj1} is valid exactly when $Q(x)\ge0$. The statement in the equality case follows immediately.
\end{proof}

\section{Probabilistic reformulation}\label{sec : prob reformulation}

The reduction in Proposition~\ref{Prop : equiv} leaves us with the positivity of the symmetric form $Q$. The purpose of this section is now to provide a slightly more conceptual reformulation of this condition. Namely, we show that $Q(x)\ge0$ is equivalent to a purely probabilistic statement: among all probability measures on $\F_q$, the uniform measure maximizes the expected number of distinct rearrangements of an i.i.d. sample (see Corollary~\ref{cor:equivalence}).

We first recall the count vector $r(t)$ from~\eqref{E : def count vec}. For $t\in\Fq^d$, set
\begin{equation}\label{E : def M(t)}
M(t):=\binom{d}{r(t)}.
\end{equation}
Let $\Simp:=\{\theta\in[0,\infty)^{\Fq}:\sum_\xi\theta_\xi=1\}$ be the set of all probability measures on $\Fq$. For a random variable $Z$ taking values in $\Fq$, we say it has law $\theta$ on $\Fq$ if $\Prob(Z = \xi) = \theta_\xi$ for each $\xi\in\Fq$. Let $T=(T_1,\dots,T_d)$ be a tuple consisting of i.i.d. random variables, with law $\theta$ on $\Fq$. Then
\begin{equation}\label{eq:EM-def}
\E_\theta[M(T)]=\sum_{t\in\Fq^d}M(t)\,\Prob(T=t)=\sum_{t\in\Fq^d}
\binom{d}{r(t)}\prod_{i=1}^d\theta_{t_i}.
\end{equation}

\begin{proposition}\label{Prop : prob form}
For any nonzero $x\in[0,\infty)^{\Fq}$, set $s:=\sum_\xi x_\xi$ and $\theta:=x/s\in\Simp$. Then
$$
Q(x) = q! \, s^d \big(A - \E_{\theta} \big[M(T)\big]\big).
$$
In addition, $A=\E_{\theta_0}\big[M(T)\big]$ where $\theta_0:=(1/q,\dots,1/q)$ denotes the uniform measure.
\end{proposition}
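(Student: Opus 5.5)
We expand $Q(x)$ directly, using the two identities of Lemma~\ref{L : m_l v eta} together with \eqref{E : sigma_l v m_l}. By \eqref{E : sigma_l v m_l}, $\Sigma_\ell(x)=b(\ell)!\,m_\ell(x)$. Since $\binom{q}{b(\ell)}\,b(\ell)!=q!$, the weights simplify to
$$
\omega_\ell\,\Sigma_\ell(x)=q!\,\binom{d}{\ell}\Big(A-\binom{d}{\ell}\Big)m_\ell(x).
$$
Summing over $\ell\in\Pd$ therefore gives
$$
Q(x)=q!\Big(A\sum_{\ell\in\Pd}\binom{d}{\ell}m_\ell(x)-\sum_{\ell\in\Pd}\binom{d}{\ell}^2m_\ell(x)\Big).
$$

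Next we evaluate the two sums. By Lemma~\ref{L : m_l v eta}(i), the first sum equals $s^d$. By Lemma~\ref{L : m_l v eta}(ii), the second sum equals $\sum_{t\in\Fq^d}\binom{d}{r(t)}\prod_i x_{t_i}$. Each summand is homogeneous of degree $d$, so substituting $x=s\theta$ turns this into $s^d\sum_t\binom{d}{r(t)}\prod_i\theta_{t_i}$. By \eqref{eq:EM-def} the latter equals $s^d\,\E_\theta[M(T)]$. Combining these gives $Q(x)=q!\,s^d\big(A-\E_\theta[M(T)]\big)$, which is the first claim.

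For the identity $A=\E_{\theta_0}[M(T)]$, we apply Lemma~\ref{L : m_l v eta}(ii) at $x=\theta_0$. Since $m_\ell$ is a sum of $\binom{q}{b(\ell)}$ monomials of degree $d$, we have $m_\ell(\theta_0)=q^{-d}\binom{q}{b(\ell)}$. Hence
$$
\sum_{\ell\in\Pd}\binom{d}{\ell}^2 q^{-d}\binom{q}{b(\ell)}=\sum_t\binom{d}{r(t)}q^{-d}=\E_{\theta_0}[M(T)],
$$
and the left-hand side is exactly the definition \eqref{eq_A} of $A$.

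There is no real obstacle. The only point needing care is the bookkeeping identity $\binom{q}{b(\ell)}b(\ell)!=q!$, where $b(\ell)!$ means $\prod_j b_j(\ell)!$. This holds because the $b_j(\ell)$, including $b_0(\ell)$ padded by $q-d$, sum to $q$. One also uses the consistency check that the number of pattern-$\ell$ exponents is $\binom{q}{b(\ell)}$, which the paper has already noted.
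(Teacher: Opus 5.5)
Your proposal is correct and follows essentially the same route as the paper: rewrite $\omega_\ell\Sigma_\ell$ via \eqref{E : sigma_l v m_l} and $\binom{q}{b(\ell)}b(\ell)!=q!$, apply both parts of Lemma~\ref{L : m_l v eta} with homogeneity to get $s^d$ and $s^d\,\E_\theta[M(T)]$, and then evaluate $m_\ell(\theta_0)=q^{-d}\binom{q}{b(\ell)}$ to identify $A$. Your explicit note that $b_0(\ell)$ absorbs the $q-d$ padding, so that the $b_j(\ell)$ sum to $q$, is the one bookkeeping detail the paper leaves implicit.
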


\begin{proof}
Applying identity~\eqref{E : sigma_l v m_l} first, then Lemma~\ref{L : m_l v eta}, and finally substituting $x=s\theta$, we obtain
$$
Q(x)=q!\sum_{\ell\in\Pd}\binom{d}{\ell}\Big(A-\binom{d}{\ell}\Big)m_\ell(x)
= q!\Big[As^d-\sum_{t\in\Fq^d}\binom{d}{r(t)}\prod_{i=1}^d x_{t_i}\Big]
= q!\,s^d\big(A-\E_\theta[M(T)]\big).
$$
We now evaluate $\E_{\theta_0}\big[M(T)\big]$. By Lemma~\ref{L : m_l v eta}(ii), this is $\sum_{\ell\in\Pd}\binom{d}{\ell}^2m_\ell(\theta_0)$. Now, for each pattern-$\ell$ exponent $\alpha$, we have $\theta_0^\alpha = q^{-d}$, and so $m_\ell(\theta_0) = \binom{q}{b(\ell)} q^{-d}$. As a consequence, 
$$
\E_{\theta_0}[M(T)] = \frac1{q^d} \sum_{\ell\in\Pd} \binom{d}{\ell}^2\binom{q}{b(\ell)} = A.
$$
\end{proof}

\begin{remark}\label{rmk : size of A}
Proposition~\ref{Prop : prob form} identifies the optimal constant as an expected value, which makes its size transparent. Since $M(t)\le d!$ for every $t \in \Fq^d$, with equality precisely when the entries of $t$ are distinct, and since the tuples with distinct entries have probability $\prod_{i=1}^{d-1}(1-i/q)$ under $\theta_0$, we get
$$
d! \prod_{i = 1}^{d - 1} \Big(1 - \frac{i}{q}\Big) \; \le \;A\; < \;d!.
$$
In particular the optimal constant $A \to d!$ as $q \to \infty$ for each fixed $d$, consistently with $A = 2 - 1/q$ for $d = 2$; see Example~\ref{ex:small-d} below.
\end{remark}

Since $Q(0)=0$, combining the above with the first equivalent formulation, Proposition~\ref{Prop : equiv}, yields the following equivalence.
\begin{corollary}\label{cor:equivalence}
Conjecture~\ref{Conj1} can be restated equivalently as follows: For every $\theta\in\Simp$, we have $\E_\theta[M(T)]\le A$, and equality holds if and only if $\theta=\theta_0$.
\end{corollary}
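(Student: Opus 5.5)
The statement to prove is Corollary~\ref{cor:equivalence}: Conjecture~\ref{Conj1} holds if and only if, for every $\theta\in\Simp$, we have $\E_\theta[M(T)]\le A$, with equality exactly at $\theta=\theta_0$. The plan is to combine Proposition~\ref{Prop : equiv} with Proposition~\ref{Prop : prob form}, handle the zero vector separately, and then match the two equality cases.

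\textbf{Inequality.} By Proposition~\ref{Prop : equiv}, inequality \eqref{E : conj1} holds for all $f$ if and only if $Q(x)\ge 0$ for all $x\in[0,\infty)^{\Fq}$. The map $f\mapsto x(f)$ is onto $[0,\infty)^{\Fq}$: given $x$, take $f(\gamma(\xi))=\sqrt{x_\xi}$. So this quantifier can be read over all nonnegative $x$.

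For $x=0$ we have $Q(0)=0$ trivially. For $x\ne 0$, write $x=s\theta$ with $s=\sum_\xi x_\xi>0$ and $\theta\in\Simp$. Proposition~\ref{Prop : prob form} gives
$$
Q(x)=q!\,s^d\big(A-\E_\theta[M(T)]\big).
$$
Every $\theta\in\Simp$ arises this way, for instance with $s=1$. Since $q!\,s^d>0$, we get that $Q\ge0$ on $[0,\infty)^{\Fq}$ if and only if $\E_\theta[M(T)]\le A$ for all $\theta\in\Simp$. Proposition~\ref{Prop : prob form} also gives $A=\E_{\theta_0}[M(T)]$, so the uniform distribution attains the bound and the constant $A^{1/2d}$ is sharp.

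\textbf{Equality cases.} Assume the inequality holds. By Proposition~\ref{Prop : equiv}, a nonzero $f$ is extremal if and only if $Q(x(f))=0$. Since $f\neq0$, the vector $x(f)$ is nonzero, so by the display above this is equivalent to $\E_{\theta}[M(T)]=A$ with $\theta=x(f)/s$.

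Moreover, $|f|$ is constant exactly when $x(f)$ is a constant vector, which happens exactly when $\theta=\theta_0$. Hence the statement \emph{maximizers are precisely the $f$ with $|f|$ constant} is equivalent to \emph{$\E_\theta[M(T)]=A$ only for $\theta=\theta_0$}, again using the surjectivity of $\theta\mapsto f$ with $x(f)=\theta$.

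\textbf{Main obstacle.} Both propositions do all the real work, so no step here is hard. The only points needing care are the surjectivity of $f\mapsto x(f)$ and the fact that the sharpness and equality claims line up under the normalization $x=s\theta$.
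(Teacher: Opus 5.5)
Your proposal is correct and follows the same route as the paper, which simply combines Proposition~\ref{Prop : equiv} with the identity $Q(x)=q!\,s^d\big(A-\E_\theta[M(T)]\big)$ of Proposition~\ref{Prop : prob form}, using $Q(0)=0$ for the zero vector. The paper leaves implicit what you spell out: the surjectivity of $f\mapsto x(f)$, and the matching of equality cases ($|f|$ constant if and only if $\theta=\theta_0$).
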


\begin{example}\label{ex:small-d}

We explicitly evaluate the terms in Corollary~\ref{cor:equivalence} and prove the conjecture directly for $d=2$. Let $t=(t_1,t_2)\in\Fq^2$. If $t_1\neq t_2$, then $r(t)$ has two coordinates equal to $1$ and all others $0$, hence $M(t)=2!/(1!\,1!)=2$. If instead $t_1=t_2$, then $r(t)$ has a single coordinate equal to $2$ and the remaining coordinates $0$, so $M(t)=2!/2!=1$. Either way, $M(t)=2-\rchi_{\{t_1=t_2\}}$. Consequently,
$$
\E_\theta[M(T)]
=\E_\theta \big(2-\rchi_{\{T_1=T_2\}}\big)
=2-\Prob(T_1=T_2)
=2-\sum_{\xi\in\Fq}\theta_\xi^2.
$$
Since $\theta$ is a probability measure, by Cauchy--Schwarz, $\sum_\xi \theta_\xi^2\ge 1/q$, with equality exactly when $(\theta_\xi)_\xi$ is proportional to $(1,\ldots,1)$, i.e., when $\theta=\theta_0$. In other words, $\E_\theta[M(T)]\le 2-1/q$, with equality if and only if $\theta=\theta_0$. This agrees with $A=2-1/q$, which can also be obtained directly. Indeed, we have $\P_2=\{(1,1),(2,0)\}$, $\binom{q}{b((1,1))}=\binom{q}{2}$ and $\binom{q}{b((2,0))}=q$, so
$$
A=q^{-2}\Big[2^2\cdot \tfrac{q(q-1)}2+ q\Big]=2- 1/q.
$$
\end{example}

\begin{remark} The inequality in Corollary~\ref{cor:equivalence} is a special case of \cite[Theorem 3.6]{Peskir1995}, where the stronger statement that $\theta \mapsto \E_\theta[M(T)]$ is Schur-concave on $\Delta_q$ is established via a transfer theorem for the multinomial distribution \cite{Rinott1973}. However, the characterization of the equality case in Conjecture \ref{Conj1} does not seem to appear there. In the next section, we present a self-contained proof of this inequality that automatically yields the characterization of the equality case. We refer to \cite[Ch.~3]{Marshal2011} for background on majorization and Schur-convexity, and note that the analog of Schur-concavity for Rademacher sums goes back to \cite{Eaton1970, Efron1969}.
\end{remark}

\section{An extremal property of the uniform distribution}\label{sec : proof of monotonicity}

Throughout this section, $\mathcal X$ denotes a finite set with $q$ elements with $q > 1$, and $\Simp$ denotes the set of probability distributions on $\mathcal X$. No arithmetic structure is imposed on $\X$, and the reader interested only in Theorem~\ref{T : main thm} may take $\mathcal X = \Fq$ throughout. The count vector $r(t)$ of~\eqref{E : def count vec} and the rearrangement count $M(t)=\binom{d}{r(t)}$ of~\eqref{E : def M(t)} are defined verbatim for $t\in\X^d$, and we use them in that generality without further comment. Our goal is to show that $\E_\theta[M(T)]$, the expected number of distinct rearrangements of the sample, strictly increases under a natural symmetrization of $\theta$.

\begin{proposition}\label{prop:pinch}
Let $d\ge2$, let $\theta\in\Simp$, and let $a,b\in\X$ satisfy $\theta_a\ne\theta_b$. Let $\theta'\in\Simp$ agree with $\theta$ off $\{a,b\}$, that is, $\theta'_\xi=\theta_\xi$ for $\xi\ne a,b$, and set $\theta'_a=\theta'_b:=\tfrac12(\theta_a+\theta_b)$. Then
$$
\E_{\theta'}[M(T)]>\E_\theta[M(T)].
$$
\end{proposition}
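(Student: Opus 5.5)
We follow the strategy announced in the outline: merge the two letters $a,b$ into a single new letter $\star$, and define the merged sample $\widetilde T$ by replacing each occurrence of $a$ or $b$ in $T$ by $\star$.

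Write $N_a,N_b$ for the number of coordinates of $T$ equal to $a$ and $b$, and put $S:=N_a+N_b$. Since $\binom{d}{r(T)}=d!/\prod_\xi r(T)_\xi!$ and $N_a!\,N_b!=S!\big/\binom{S}{N_a}$, we have the exact factorization
$$
M(T)=\binom{S}{N_a}\,M(\widetilde T).
$$
Here $M(\widetilde T)$ is computed on the alphabet $(\X\setminus\{a,b\})\cup\{\star\}$. The law of $\widetilde T$ is i.i.d.\ with mass $\theta_a+\theta_b$ at $\star$, so it is identical under $\theta$ and under $\theta'$. Moreover, conditionally on $\widetilde T$ (which determines $S$), each of the $S$ coordinates equal to $\star$ is independently $a$ with probability $\rho:=\theta_a/(\theta_a+\theta_b)$. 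Hence $N_a\mid\widetilde T\sim\Bin(S,\rho)$, and
$$
\E_\theta[M(T)]=\E\big[M(\widetilde T)\,H_S(\rho)\big],\qquad H_s(\rho):=\E\Big[\tbinom{s}{K}\Big],\quad K\sim\Bin(s,\rho).
$$
Under $\theta'$ the same formula holds with $\rho$ replaced by $\tfrac12$. We assume $\theta_a+\theta_b>0$; this is automatic since $\theta_a\ne\theta_b$.

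It therefore suffices to show two things:
\begin{enumerate}[label=(\roman*)]
\item $H_s(\rho)\le H_s(\tfrac12)$ for every $s\ge0$ and $\rho\in[0,1]$, with strict inequality when $s\ge2$ and $\rho\ne\tfrac12$;
\item the event $\{S\ge2\}$ has positive probability under the merged law.
\end{enumerate}
Part (ii) is immediate: $\Prob(S=d)=(\theta_a+\theta_b)^d>0$ and $d\ge2$. Since $M(\widetilde T)\ge1$, the expectation of the nonnegative difference $M(\widetilde T)\big(H_S(\tfrac12)-H_S(\rho)\big)$ is then strictly positive. Also $\rho\ne\tfrac12$ exactly because $\theta_a\ne\theta_b$.

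For (i), write
$$
H_s(\rho)=\sum_{k=0}^s\binom{s}{k}^2\rho^k(1-\rho)^{s-k}.
$$
This is symmetric under $\rho\mapsto1-\rho$. The plan is to show that it is a polynomial in $c=\rho(1-\rho)\in[0,\tfrac14]$ with nonnegative coefficients, not all of which vanish beyond the constant term when $s\ge2$. Monotonicity in $c$ then gives the maximum at $c=\tfrac14$, i.e.\ $\rho=\tfrac12$, uniquely.

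To do this, pair the terms $k$ and $s-k$. Each pair contributes $\binom{s}{k}^2c^{k}\big(\rho^{s-2k}+(1-\rho)^{s-2k}\big)$ for $k<s/2$. The power sums $u_m:=\rho^m+(1-\rho)^m$ satisfy $u_0=2$, $u_1=1$ and $u_m=u_{m-1}-c\,u_{m-2}$. So each $u_m$ is a polynomial in $c$, but it has alternating signs, e.g.\ $u_2=1-2c$. This sign issue is the main obstacle: the nonnegativity of the coefficients must come from the combination with the weights $\binom{s}{k}^2$, not term by term.

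The first thing to try is to expand $u_m=\sum_j(-1)^j\frac{m}{m-j}\binom{m-j}{j}c^j$ and collect the coefficient of $c^n$, which becomes an alternating binomial sum. I would then identify that sum in closed form, for instance via a generating function or Zeilberger-type recurrence, and hope it is a product of binomials such as $\binom{s}{n}\binom{2n}{n}$ or similar.

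A cleaner alternative is probabilistic. Note $\binom{s}{K}$ counts the subsets $J\subseteq\{1,\dots,s\}$ with $|J|=K$. Writing $H_s(\rho)=\sum_{|J|=s'}\cdots$, one can try to express $H_s$ as a sum over pairs of subsets $(J,J')$ of the probability that the set of successes equals $J$, and group the terms so that each group is a product of factors of the form $\rho(1-\rho)$ or $\rho^2+(1-\rho)^2+$ (positive multiples of $c$).

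If both approaches stall, a fallback for the strict inequality alone is to prove $\frac{d}{dc}H_s>0$ directly via the derivative formula $\frac{d}{d\rho}\E f(K)=s\,\E[f(K'+1)-f(K')]$ with $K'\sim\Bin(s-1,\rho)$, combined with the log-concavity and unimodality of $k\mapsto\binom{s}{k}$.
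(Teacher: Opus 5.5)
Outside step (i), your argument coincides with the paper's:
\begin{itemize}
\item the exact factorization $M(T)=\binom{S}{N_a}M(\widetilde T)$;
\item the lumping fact $N_a\mid\widetilde T\sim\Bin(S,\rho)$;
\item the reduction $\E_\theta[M(T)]=\E[M(\widetilde T)H_S(\rho)]$, with the law of $\widetilde T$ unchanged by the pinch;
\item strictness via the event $\{S=d\}$ of probability $(\theta_a+\theta_b)^d$.
\end{itemize}
The genuine gap is step (i), which carries all the non-bookkeeping content of the proposition. You do not prove it: you list three strategies and execute none. Your guessed closed form is also wrong. The correct identity is $H_s(\rho)=\sum_{j\le s/2}\binom{s}{2j}\binom{2j}{j}c^j$, for example $H_2=1+2c$ and $H_4=1+12c+6c^2$, whereas $\binom{s}{n}\binom{2n}{n}$ would give $1+4c$ and $1+8c+36c^2$. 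The paper reaches this identity without alternating sums. With $u=\sqrt\rho$, $v=\sqrt{1-\rho}$ one has $|u+ve^{i\varphi}|^2=1+2\sqrt c\cos\varphi$. Expanding $(u+ve^{i\varphi})^s\,\overline{(u+ve^{i\varphi})^s}$ and averaging over $\varphi$ gives $H_s(\rho)=\frac1{2\pi}\int_0^{2\pi}(1+2\sqrt c\cos\varphi)^s\,\d\varphi$. Expanding the integrand binomially, odd powers of $\cos\varphi$ average to $0$ and $\cos^{2j}\varphi$ averages to $\binom{2j}{j}4^{-j}$. This gives nonnegative coefficients in $c$, and the $c^1$ coefficient is $s(s-1)>0$ for $s\ge2$.

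Your fallback does close the gap, and more cheaply than you suggest. It yields the inequality and its strictness at once, and log-concavity is not needed. Let $f(k)=\binom sk$ and $g(j):=f(j+1)-f(j)$. The symmetry $f(s-k)=f(k)$ gives $g(s-1-j)=-g(j)$. Pairing $j$ with $s-1-j$ in $H_s'(\rho)=s\,\E[g(K')]$, $K'\sim\Bin(s-1,\rho)$, yields
$$H_s'(\rho)=s\sum_{0\le j<(s-1)/2}\binom{s-1}{j}\,g(j)\,\rho^j(1-\rho)^j\Big[(1-\rho)^{s-1-2j}-\rho^{s-1-2j}\Big].$$
For $\rho\in(0,\tfrac12)$ each bracket is positive. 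Also $g(j)>0$ for $j<(s-1)/2$, since $\binom{s}{j+1}/\binom{s}{j}=(s-j)/(j+1)$. The sum is nonempty for $s\ge2$, as it contains $j=0$ with $g(0)=s-1$. Hence $H_s$ is strictly increasing on $[0,\tfrac12]$. Combined with $H_s(\rho)=H_s(1-\rho)$, this gives $H_s(\rho)<H_s(\tfrac12)$ for $\rho\ne\tfrac12$ and $s\ge2$. This route avoids the closed form but proves less than the paper's, since it gives no coefficientwise positivity in $c$. As submitted, however, neither it nor the coefficient identity is carried out, so your proof is incomplete at exactly its key step.
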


The passage from $\theta$ to $\theta'$ is, in the language of majorization \cite[Chapter 2]{Marshal2011}, a $T$-transform with parameter $\lambda=1/2$. In particular, $\theta' \prec \theta$. Given the above monotonicity result, the proof of Conjecture~\ref{Conj1} is immediate. Indeed, by Corollary~\ref{cor:equivalence}, it suffices to prove the following for $\X = \Fq$.

\begin{proof}[\textbf{Proof of Theorem~\ref{T : prob thm}}]
Being a polynomial, the function $\theta\mapsto\E_\theta[M(T)]$ is continuous, so it has a maximum point on the compact set $\Simp \subset \R^{|\X|}$. The fact that the uniform distribution $\theta_0$ is the unique maximizer follows directly from Proposition~\ref{prop:pinch}.
\end{proof}

We now return to the proof of Proposition~\ref{prop:pinch}. For the remainder of the section, we fix the two elements $a, b\in\X$. The strategy is to isolate the dependence of $\E_\theta[M(T)]$ on the way the mass $\theta_a + \theta_b$ is divided between $a$ and $b$. Identifying $a$ and $b$ as a single letter factors the rearrangement count exactly, as the rearrangement count of the merged sample times a binomial coefficient that captures all of that dependence. Taking expectations converts the binomial coefficient into a single one-variable function $H_s$, and the proposition reduces to the statement that $H_s$ is largest when the mass is divided evenly.

\subsection{An exact factorization under merging two letters}

Let $*$ be a new symbol not in $\X$. Consider the set $\A:=(\X \setminus\{a,b\})\cup\{*\}$ of size $q - 1$, and let $\pi:\X\to\A$ be the ``quotient" map that identifies $a$ with $b$, i.e., $\pi(a)=\pi(b)=*$ and $\pi(\xi)=\xi$ for $\xi \neq a, b$. For $T=(T_1,\dots,T_d)$ i.i.d.\@ with law $\theta$, set $\widetilde T_i:=\pi(T_i)$. Write $N_\xi:=\#\{i:T_i=\xi\}$ for $\xi\in\X$ (so $N=r(T)$ in the notation of Lemma~\ref{L : m_l v eta}), and let $S:=N_a+N_b$.

For every $t\in\X^d$, with $\tilde t:=(\pi(t_1),\dots,\pi(t_d))$ and $s:=r(t)_a+r(t)_b$, it holds
\begin{equation}\label{E : M(t) vs M(tilde t)}
M(t)=\binom{s}{r(t)_a}\,M(\tilde t).
\end{equation}
Indeed, note that the count vector $r(\tilde t)$ (over the set $\A$) satisfies $r(\tilde t)_\xi=r(t)_\xi$ for every $\xi\ne a, b$ and $r(\tilde t)_* = r(t)_a + r(t)_b = s$. Hence,
$$
M(\tilde t) = \frac{d!}{s!\prod_{\xi\ne a,b}r(t)_\xi!} = \binom{d}{r(t)} \frac{r(t)_a!\,r(t)_b!}{s!}.
$$
Since $M(t) = \binom{d}{r(t)}$, identity~\eqref{E : M(t) vs M(tilde t)} follows. Applying this pointwise to the tuple $T$ gives the following almost sure identity
\begin{equation}\label{E : factor-random}
M(T)=\binom{S}{N_a}\,M(\widetilde T).
\end{equation}

\begin{lemma}\label{lem:conditional-binom}
Suppose $m:=\theta_a+\theta_b>0$ and let $ \rho :=\theta_a/m$. Then, conditionally on $\widetilde T$, the random variable $N_a$ has a $\Bin(S, \rho)$ distribution.
\end{lemma}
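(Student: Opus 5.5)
The plan is to compute the conditional probability directly, by decomposing the i.i.d.\ sample into two independent layers. First we reveal which coordinates land in $\{a,b\}$ and what the remaining ones are; then we reveal, for each coordinate landing in $\{a,b\}$, whether it is $a$ or $b$.

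\textbf{Step 1: each coordinate as a two-stage draw.} For a single coordinate $T_i$ with law $\theta$, the variable $\widetilde T_i=\pi(T_i)$ has law $\tilde\theta$ on $\A$, where $\tilde\theta_*=m$ and $\tilde\theta_\xi=\theta_\xi$ for $\xi\ne a,b$. Moreover, for any $\tilde t_i\in\A$,
$$
\Prob(T_i=a\mid \widetilde T_i=*)=\theta_a/m=\rho ,
$$
while for $\tilde t_i\ne *$ the value of $T_i$ is determined by $\tilde t_i$. This uses $m>0$, so that the conditioning event $\{\widetilde T_i=*\}$ has positive probability whenever it is relevant.

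\textbf{Step 2: independence across coordinates.} Fix $\tilde t\in\A^d$ with $\Prob(\widetilde T=\tilde t)>0$, and let $I:=\{i:\tilde t_i=*\}$, so that $S=|I|$ on this event. Since the pairs $(T_i,\widetilde T_i)$ are independent across $i$, we can factor
$$
\Prob(T=t\mid \widetilde T=\tilde t)=\prod_i \Prob(T_i=t_i\mid \widetilde T_i=\tilde t_i).
$$
Consequently, conditionally on $\widetilde T=\tilde t$, the indicators $\mathbf 1\{T_i=a\}$ for $i\in I$ are i.i.d.\ Bernoulli$(\rho)$, and $T_i\notin\{a,b\}$ for $i\notin I$.

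\textbf{Step 3: identify the law of $N_a$.} On this event, $N_a=\sum_{i\in I}\mathbf 1\{T_i=a\}$. This is a sum of $S=|I|$ i.i.d.\ Bernoulli$(\rho)$ variables, hence has law $\Bin(S,\rho)$. Since $S$ is a function of $\widetilde T$, the statement follows.

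There is no real obstacle here. The only point needing care is the bookkeeping in Step 2: the conditioning event must have positive probability, and the conditional law must be seen to depend on $\tilde t$ only through $S$. For completeness, one can verify the claim directly by the explicit computation
$$
\Prob(T=t\mid\widetilde T=\tilde t)=\frac{\prod_i\theta_{t_i}}{\prod_i\tilde\theta_{\tilde t_i}}=\rho^{\,r(t)_a}(1-\rho)^{\,r(t)_b},
$$
and then sum over the $\binom{S}{k}$ tuples $t$ with $r(t)_a=k$.
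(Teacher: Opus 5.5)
Your proof is correct and follows essentially the same route as the paper: you factor $\Prob(T=t\mid\widetilde T=u)$ coordinatewise using independence, and note that each factor is $\theta_{t_i}/m$ on the $S$ coordinates with $u_i=*$ and $1$ otherwise. These coordinates are therefore conditionally i.i.d.\ with $\Prob(T_i=a\mid\widetilde T=u)=\rho$, so $N_a\sim\Bin(S,\rho)$. Your closing step, computing $\rho^{r(t)_a}(1-\rho)^{r(t)_b}$ and summing over the $\binom{S}{k}$ compatible tuples, just makes the final identification slightly more explicit than the paper does.
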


This is the standard \emph{lumping} property of the multinomial distribution: merging two cells again produces a multinomial vector, and conditionally on the merged cell count, the split between the two original cells is binomial (see, e.g., \cite[Chapter 35]{JKB1997}). We include the following short proof to keep the account self-contained.

\begin{proof}
To begin, we observe that $T_1,\ldots,T_d$ are conditionally independent given $\widetilde T$. Indeed, for a fixed $t\in\X^d$ and $u \in\A^d$ such that $\pi(t_i) = u_i$ for every $i$ (otherwise, both sides of the identity below are zero), by the independence of $T_i$, we have
$$
\Prob(T=t\mid\widetilde T = u)
=\prod_{i=1}^d\frac{\Prob(T_i=t_i)}{\Prob(\widetilde T_i=u_i)}
=\prod_{i=1}^d\Prob(T_i=t_i\mid\widetilde T_i=u_i),
$$
because each term on the RHS equals $\theta_{t_i}/m$ when $t_i\neq u_i$ (that is, $t_i\in\{a,b\}$ and $u_i=*$), and equals $1$ in all other cases. In consequence, given $\tilde T$, the $S$ coordinates with $u_i=*$ are i.i.d.\@ with $\Prob(T_i=a\mid\widetilde T=u)=\rho$, giving us $N_a \sim \Bin(S,\rho)$.
\end{proof}

Define the probability measure $\tilde \theta$ on $\A$ by merging the probabilities at $a$ and $b$; i.e., set $\tilde\theta(*) := m$ and $\theta_\xi$ otherwise. Thus, the projected random tuple $\widetilde T$ is an i.i.d.\@ tuple with law $\tilde\theta$. Now, for a nonnegative integer $s$ and $\rho \in[0,1]$, set
\begin{equation}\label{eq:Hs-def}
H_s (\rho):=\E\Big[\binom{s}{K}\Big],\qquad K\sim\Bin(s, \rho).
\end{equation}
In other words, $H_s(\rho) = \sum_{k=0}^s \binom sk^2 \rho^k (1- \rho)^{s-k}$. Here we use
$$
\binom{0}{0} := 1 \qtq{and} \binom{0}{k} := 0 \qtq{for each positive integer $k$,}
$$
and we will follow this for the remainder of this article.

\begin{proposition}\label{prop:factor-expectation}
Let $\theta\in\Simp$ with $m=\theta_a+\theta_b>0$ and $ \rho = \theta_a/m$. Then
$$
\E_\theta[M(T)]=\E_{\tilde\theta}\big[M(\widetilde T)\,H_S(\rho)\big],
$$
where $S=S(\widetilde T)$ is the number of coordinates of $\widetilde T$ equal to $*$.
\end{proposition}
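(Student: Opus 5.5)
The plan is to condition on $\widetilde T$ and use the tower property. Start from the almost sure factorization \eqref{E : factor-random}, $M(T)=\binom{S}{N_a}M(\widetilde T)$, and take expectations:
$$
\E_\theta[M(T)]=\E\Big[\E\Big[\tbinom{S}{N_a}M(\widetilde T)\,\Big|\,\widetilde T\Big]\Big].
$$
Since $S$ is the number of coordinates of $\widetilde T$ equal to $*$, both $S$ and $M(\widetilde T)$ are functions of $\widetilde T$. Hence $M(\widetilde T)$ can be pulled out of the inner conditional expectation, which leaves $M(\widetilde T)\,\E[\binom{S}{N_a}\mid\widetilde T]$.

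Next, apply Lemma~\ref{lem:conditional-binom}. Conditionally on $\widetilde T$, the variable $N_a$ is $\Bin(S,\rho)$ with $S$ fixed by the conditioning. By the definition \eqref{eq:Hs-def} of $H_s$, we get $\E[\binom{S}{N_a}\mid\widetilde T]=H_S(\rho)$ almost surely. When $S=0$ this reads $N_a=0$ and $H_0(\rho)=\binom00=1$, which is consistent with the stated convention. So $\E_\theta[M(T)]=\E[M(\widetilde T)H_S(\rho)]$, where the outer expectation is over the law of $\widetilde T$.

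Finally, identify this law with $\tilde\theta$. The $\widetilde T_i=\pi(T_i)$ are independent because the $T_i$ are. Each has $\Prob(\widetilde T_i=*)=\theta_a+\theta_b=m$ and $\Prob(\widetilde T_i=\xi)=\theta_\xi$ otherwise, which is exactly $\tilde\theta$. Therefore the outer expectation is $\E_{\tilde\theta}$, as claimed.

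Since everything is finite, the conditioning can also be written out as an explicit sum over $u\in\A^d$ weighted by $\Prob(\widetilde T=u)$, which avoids any measure-theoretic subtlety. The only point needing care is the well-definedness of the conditioning. This is handled by the hypothesis $m>0$ together with restricting the sum to $u$ with $\Prob(\widetilde T=u)>0$. I expect no step to be a real obstacle: the argument is a direct combination of \eqref{E : factor-random} with Lemma~\ref{lem:conditional-binom}.
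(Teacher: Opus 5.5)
Your proposal is correct and follows the paper's proof essentially verbatim: take expectations in \eqref{E : factor-random}, condition on $\widetilde T$ to pull out $M(\widetilde T)$, invoke Lemma~\ref{lem:conditional-binom} to obtain $H_S(\rho)$, and identify the law of $\widetilde T$ as $\tilde\theta$. Your extra remarks on the case $S=0$ and on writing the conditioning as a finite sum over $u\in\A^d$ with $\Prob(\widetilde T=u)>0$ are harmless clarifications that the paper leaves implicit.
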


\begin{proof}
Taking expectations on both sides of the identity \eqref{E : factor-random}, we get
$$
\E_\theta[M(T)]=\E\Big[\binom{S}{N_a}\,M(\widetilde T)\Big].
$$
We recall that for conditional expectation $\E[X]=\E\big[\E[X\mid \mathcal F]\big]$ for any random variable $X$ and any $\sigma$-field $\mathcal F$. Setting $X=\binom{S}{N_a}M(\widetilde T)$ and $\mathcal F$ generated by $\widetilde T$, we get
$$
\E\Big[\binom{S}{N_a}M(\widetilde T)\Big]
=\E\Big[\E\big[\tbinom{S}{N_a}M(\widetilde T)\mid\widetilde T\big]\Big]
=\E\Big[M(\widetilde T)\,\E\big[\tbinom{S}{N_a}\mid\widetilde T\big]\Big].
$$
At this point, we now apply Lemma~\ref{lem:conditional-binom}. Thus, conditionally on $\widetilde T$ (so that $S$ is fixed), $N_a\sim\Bin(S, \rho)$, and so $\E\big[\tbinom{S}{N_a}\mid\widetilde T\big]=H_S(\rho)$. Consequently, we have 
$$
\E_\theta[M(T)]=\E\big[M(\widetilde T)\,H_S (\rho)\big].
$$
We recall that the projected i.i.d.\@ tuple $\widetilde T$ has law $\tilde\theta$, so the above RHS is $\E_{\tilde\theta}[M(\widetilde T)H_S(\rho)]$.
\end{proof}

\subsection{Monotonicity} Proposition~\ref{prop:factor-expectation} reduces the effect of averaging the masses at $a$ and $b$ to the one-variable function $H_s$. We are thus led to examine the function $H_s$. We claim that for a fixed $s \ge 0$ as a function of $c := \rho (1 - \rho) \in[0, 1/4]$, $H_s$ is non-decreasing, furthermore, for $s\ge2$, it is strictly increasing. So the maximum is attained at $\rho = 1/2$. This would follow once we establish that $H_s$, as a polynomial in $c$, has nonnegative coefficients with at least one nonzero coefficient for $s \geq 2$. In fact, we will show that for $\rho \in [0,1]$
\begin{equation}\label{E : pos coeff}
H_s(\rho)
= \sum_{j=0}^{\lfloor s/2\rfloor} \binom{s}{2j}\binom{2j}{j}\,c^j.
\end{equation}

To see this, set $u :=\sqrt{\rho}$ and $v :=\sqrt{1-\rho}$. Then $\big|u + v e^{i\varphi}\big|^{2} = 1+2\sqrt{c}\,\cos\varphi$, and so
$$
\big(1+2\sqrt{c}\cos\varphi\big)^s 
= \big(u + v e^{i\varphi}\big)^s \, \overline{\big(u + v e^{i\varphi}\big)^{s}}.
$$
Expanding each factor by the binomial theorem and using the orthogonality $\frac1{2\pi} \int_{0}^{2\pi} e^{i(k - l)\varphi} \, \d\varphi=\delta_{kl}$, we deduce
\begin{equation}\label{eq:intrep}
\frac{1}{2\pi}\int_{0}^{2\pi}\big(1+2\sqrt{c}\,\cos\varphi\big)^{s}\,\d\varphi
=\sum_{k=0}^{s}\binom{s}{k}^{2}u^{2(s-k)} v^{2k}
= H_s(\rho).
\end{equation}
We now expand the integrand in the LHS using the binomial theorem and noting that $\frac{1}{2\pi}\int_{0}^{2\pi}\cos^{m}\varphi\,\d\varphi=0$ for $m$ odd, and $=\binom{2j}{j}4^{-j}$ for $m = 2 j$, we deduce that the above LHS is 
$$
\sum_{j=0}^{\lfloor s/2\rfloor}\binom{s}{2j}\big(2\sqrt{c}\big)^{2j}\binom{2j}{j}4^{-j}
=\sum_{j=0}^{\lfloor s/2\rfloor}\binom{s}{2j}\binom{2j}{j}\,c^{\,j}
$$
giving the desired identity.

\begin{remark}
Identity~\eqref{E : pos coeff} relates $H_s$ to the Legendre polynomial of degree $s$. For $\rho\neq 1/2$, put $z:=(1-2\rho)^{-1}$, so that $z^{2}-1=4cz^{2}$; comparison of~\eqref{E : pos coeff} with the classical expansion $P_s(z)=\sum_{j\le s/2}\binom{s}{2j}\binom{2j}{j}\big(\tfrac{z^{2}-1}{4}\big)^{j}z^{\,s-2j}$ yields $P_s(z)=z^{s}H_s(\rho)$, and~\eqref{eq:intrep} is then Laplace's integral representation of $P_s$; see \cite[(4.8.10)]{Szego}.
\end{remark}

\subsection{The pinch: Proof of Proposition~\ref{prop:pinch}}

We can now complete the pinch argument. Proposition~\ref{prop:factor-expectation} expresses expectation in terms of the projected distribution. In addition, the monotonicity proved above shows that balancing the two masses at $a$ and $b$ can only increase the value.

\begin{proof}
We apply Proposition~\ref{prop:factor-expectation} to both $\theta$ and $\theta'$ and then use the monotonicity of $H_s$ to obtain
$$
\E_{\theta'}[M(T)]-\E_\theta[M(T)]
=\E_{\tilde\theta}\Big[M(\widetilde T)\big(H_S(1/2) - H_S(\rho)\big)\Big] \geq 0,
$$
where we recall that $\rho = \frac{\theta_a}m \neq \frac 12$.

To show that the above RHS is positive, it suffices to produce an event of positive probability on which the summand is positive. Since the projected $\widetilde T_i$ are i.i.d.\@ with $\Prob(\widetilde T_i=*)=m>0$, the event that every one of the $d$ coordinates equals $*$ has probability $\Prob(S = d) = m^d$. On this event $S= d \ge 2$, the strict monotonicity of $H_s$ in $c = \rho (1 - \rho)$ implies $H_d(1/2) > H_d(\rho)$, completing the proof.
\end{proof}

\begin{remark}
In~\cite{Group2026}, our earlier partial progress on Conjecture~\ref{Conj1} established $Q\ge0$ by a different route, namely, writing $Q=\sum_{n\in N}
\omega_n \Sigma_n+\sum_{p\in P}\omega_p\Sigma_p$ (splitting $\Pd$ into the partitions with negative and nonnegative weight $\omega_\ell$) and applying Muirhead's inequality term by term via a fractional matching between $N$ and $P$ (a weighted bipartite graph whose existence is characterized by Strassen's theorem, see \cite[Lemma 5]{Group2026}). This reduces Conjecture \ref{Conj1} to a finite combinatorial condition, which the authors verified by direct computation for $d\le20$ and showed that it holds once $q$ is large enough. The argument of the present note establishes $Q\ge0$ directly, through the probabilistic reformulation of Sections~\ref{sec : prob reformulation} and \ref{sec : proof of monotonicity}, without constructing any such matching. It says nothing about whether condition \cite[(2.19)]{Group2026} holds outside the ranges $d\le20$ or large $q$ already covered by \cite[Theorems 2--3]{Group2026}; that finite combinatorial question is, on its face, a different (and \emph{a priori}  stronger, since a Muirhead-type certificate is a sufficient but not necessary route to a symmetric form's positivity) statement from $Q\ge0$ itself and is not addressed by Theorem~\ref{T : main thm} or its proof.
\end{remark}

\section*{Acknowledgements}
This project originated during a SQuaRE at the American Institute of Mathematics. The authors are grateful to AIM for offering a supportive environment. Madrid was partially supported by the Simons Foundation Grant $\# 453576$. Oliveira e Silva was funded by FCT/Portugal and the Recovery and Resilience Plan (PRR) through projects UID/04459/2025 and UID/PRR/04459/2025, and by the project 2023.17881.ICDT (SHADE).
Stovall received partial support from NSF DMS-2246906 and from a grant from the Simons Foundation, SFI-MPS-SFM-00011865, and Tautges was partially supported by NSF DMS-2037851 and DMS-2246906. 
Part of this work was conducted during Biswas's visit to IIT Jammu, whose support and enriching environment he gratefully acknowledges.

\subsection*{AI Statement}
At various stages of this project, the authors made use of Claude. The starting point was a question one of us put to it: whether the central limit theorem approach in Beckner's proof of the sharp Hausdorff–Young inequality \cite[Proof of Theorem 1]{Beckner1975} could be adapted to the setting of Conjecture~\ref{Conj1}. The first proposal that came back rested on Schur concavity. When asked instead for a proof of the required monotonicity, along the lines of Beckner's argument and avoiding concavity arguments, the model produced a draft of what is now Section~\ref{sec : proof of monotonicity}, together with a rederivation of the reduction of \cite[\S2.1--\S2.2]{Group2026}.

We worked from that draft. The paper, as it stands, is our own, completely rewritten after checking the argument line by line and locating it in the literature.
The sources we found along the way are cited where they bear on the text. Responsibility for what appears here rests with the authors.

%%%%%%%%%%%%%%%%%%%%%%%%%%%%%%%%%%%%%%%%%%%%%%% 
%%%%%%%%%%%%%%%%%%%%%%%%%%%%%%%%%%%%%%%%%%%%%%% 
%%%%%%%%%%%%%%%%%%%%%%%%%%%%%%%%%%%%%%%%%%%%%%% 

\bibliographystyle{amsplain}
%bibliographystyle{abbrvnat}
\bibliography{const}

%%%%%%%%%%%%%%%%%%%%%%%%%%%%%%%%%%%%%%%%%%%%%%%
%%%%%%%%%%%%%%%%%%%%%%%%%%%%%%%%%%%%%%%%%%%%%%%
%%%%%%%%%%%%%%%%%%%%%%%%%%%%%%%%%%%%%%%%%%%%%%%

\end{document}